\newtheorem{theorem}{Theorem}[section]
\newtheorem{lemma}[theorem]{Lemma}
\newtheorem{corollary}[theorem]{Corollary}
\theoremstyle{definition}
\newtheorem{definition}[theorem]{Definition}
\theoremstyle{remark}
\newtheorem{remark}[theorem]{Remark}
\newcommand{\N}{\mathbb{N}}
\newcommand{\n}{\noindent}
\begin{document}

\title{A Note On Transversals}

\author{Vivek Kumar Jain
~\\ 
Department of Mathematics, Central University of Bihar \\ 
Patna (India) 800 014 \\
E-mail: jaijinenedra@gmail.com}

\date{}
\maketitle

\textbf{Abstract:} Let $G$ be a finite group and $H$ a core-free subgroup of $G$. We will show that if there exists a solvable, generating transversal of $H$ in $G$, then $G$ is a solvable group. Further, if $S$ is a generating transversal of $H$ in $G$ and $S$ has order $2$ invariant sub right loop $T$ such that the quotient $S/T$ is a group. Then $H$ is an elementary abelian 2-group.  
\vspace{1cm}

\noindent \textbf{\textit{Key words:}} Solvable, Core-free, Transversals, Right loop, Group Torsion.
\smallskip

\noindent \textbf{\textit{2000 Mathematical Subject classification:}} 20D60, 20N05.

\vspace{1cm}

\section{Introduction}
Transversals play an important role in characterizing the group and the embedding of subgroup in the group. In \cite{tm}, Tarski monsters has been characterized with the help of transversals.  In \cite{p}, Lal shows that if all subgroups of a finitely generated solvable group are stable (any right transversals of a subgroup have isomorphic group torsion), then the group is nilpotent. In \cite{ict2}, it has been shown that if the isomorphism classes of transversals of a subgroup in a finite group is $3$, then the subgroup itself has index $3$. Converse of this fact is also true. In this paper we will characterize solvability of group in terms of a generating transversal of a core-free subgroup. Also we will propose some problems.

In \cite{rltr}, it has been shown that each right transversal is a right loop with respect to the operation induced by the operation of the group. Let us see how. 
 Let $G$ be a finite group and $H$ a proper core-free subgroup of $G$. A {\textit{right transversal}} is a subset of $G$ obtained by selecting one and only one element from each right coset of $H$ in $G$ and identity from the coset $H$. Now we will call it {\textit {transversal}} in place of right transversal. Suppose that $S$ is a transversal of $H$ in $G$. We define an operation $\circ $ on $S$ as follows: for $x,y \in S$, $\{x \circ y \}:=S \cap Hxy$.
It is easy to check that $(S, \circ )$ is a  right loop, that is the equation of the type $X \circ a=b$, $X$ is unknown and $a,b \in S$ has unique solution in $S$ and $(S, \circ)$ has both sided identity.
 In \cite{rltr}, it has been shown that for each right loop there exists a pair $(G,H)$ such that $H$ is a core-free subgroup of the group $G$ and the given right loop can be identified with a tansversal of $H$ in $G$. Not all transversals of a subgroup generate the group. But it is proved by Cameron in \cite{pjc}, that if a subgroup is core-free, then always there exists a transversal which generates the whole group. We call such a transversal as {\it generating transversal}. 

The difference between right loop and group is the associativity of operation. In \cite{rltr}, a notion of {\it group torsion} is introduced which measures the deviation of a right loop from being a group. Let us explain. 
Let $(S, \circ)$ be a right loop (identity denoted by $1$). Let $x,y,z \in S$. Define a map $f^S(y,z)$ from $S$ to $S$ as follows: $f^S(y,z)(x)$ is the unique solution of the equation $X \circ (y \circ z )=(x \circ y ) \circ z$, where $X$ is unknown. It is easy to verify that $f^S(y,z)$ is a bijective map. We denote by $G_S$ the subgroup of $Sym(S)$, the symmetric group on $S$ generated by the set $\{f^S(y,z) \mid y,z \in S \}$. This group is called {\textit{group torsion}} of $S$ \cite[Definition 3.1, p. 75]{rltr}. Further, it acts on $S$ through the action $\theta^S$ defined as: for $x \in S$ and $h \in G_S$; $x \theta^S h := h(x)$. Also note that right multiplication by an element of $S$ gives a bijective map from $S$ to $S$, that is an element of $Sym(S)$. 
The subgroup generated by this type of elements in $Sym(S)$ is denoted by $G_SS$ because $G_S$ is a subgroup of it and the right multiplication map associated with the elements of $S$ form a transversal of $G_S$ in $G_SS$. Note that if $H$ is core-free subgroup of a group $G$ and $S$ is a transversal of $H$ in $G$. Then $G\cong G_SS$ such that $H \cong G_S$ \cite[Proposition 3.10, p. 80]{rltr}.

In this paper we will prove following two results:

\begin{theorem}\label{1}
If a finite group has a solvable generating transversal with respect to a core-free subgroup, then  the group is solvable.
\end{theorem}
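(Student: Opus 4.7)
The strategy I would follow is induction on $|G|$. The base case $|G|=1$ is immediate, since a core-free subgroup of the trivial group is trivial. For the inductive step, assume $|G|>1$, whence also $|S|>1$. Using the solvability of the right loop $S$, choose a proper invariant sub right loop $T\subsetneq S$ for which the quotient right loop $S/T$ is an abelian group (take, for instance, the penultimate term of a solvable series for $S$). By the correspondence between invariant sub right loops of $S$ and subgroups of $G$ containing $H$ reviewed in the introduction and developed in \cite{rltr}, the subset $HT$ is a subgroup of $G$. The additional fact that $S/T$ is a \emph{group} (rather than merely a right loop) translates, via $G\cong G_SS$, into $HT$ being \emph{normal} in $G$ with $G/HT\cong S/T$, hence $G/HT$ abelian.

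It therefore suffices to prove that $HT$ is solvable, and the natural approach is to apply the inductive hypothesis to the triple $(HT,H,T)$. Three of the four hypotheses are routine: $H$ remains core-free in $HT$, since its core in $HT$ is contained in its core in $G$ (which is trivial); $T$ is a transversal of $H$ in $HT$; and $T$ is a solvable right loop, because intersecting the solvable series of $S$ with $T$ yields a series of sub right loops of $T$ whose successive quotients inject into the abelian quotients of the original, hence are themselves abelian groups.

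The principal obstacle is ensuring that $T$ is a \emph{generating} transversal of $H$ in $HT$: the $H$-invariance of $T$ yields $\langle T\rangle\subseteq HT$, but in general this inclusion is strict. To circumvent this I would restrict attention to $M:=\langle T\rangle$, in which $T$ is by construction a generating transversal of $H\cap M$, and then factor out $N:=\mathrm{core}_M(H\cap M)\subseteq H$ in order to restore the core-free hypothesis before invoking the inductive hypothesis on $|M/N|<|G|$. The hardest step, which I expect to be the real core of the argument, is to promote this back to solvability of $HT$ (and hence of $G$). I would tackle it by iterating the construction along the full solvable series $S\supset S_1\supset\cdots\supset S_n=\{1\}$, producing a subnormal chain $H=HS_n\triangleleft HS_{n-1}\triangleleft\cdots\triangleleft HS_0=G$ whose successive quotients are abelian; the solvability of $G$ then reduces to that of $H$, which one would establish by analyzing the faithful action of $H$ on $S$ that preserves this chain and acts by group automorphisms on each abelian factor $S_i/S_{i+1}$.
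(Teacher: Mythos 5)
There is a genuine gap, and it sits exactly where you say the ``hardest step'' is. Your endgame is to build the chain $H=HS_n\triangleleft HS_{n-1}\triangleleft\cdots\triangleleft HS_0=G$ with abelian quotients and then \emph{reduce the solvability of $G$ to the solvability of $H$}, proving the latter by letting $H$ act by automorphisms on the abelian factors $S_i/S_{i+1}$. That last step is unsound: a group acting faithfully by automorphisms on (a chain of) abelian groups need not be solvable --- $GL_3(\F_2)$ acts faithfully on $(\Z/2\Z)^3$ and is simple --- and nothing in your setup forces the action on each factor to be trivial, which is what you would need to invoke a stability-group argument. The correct endgame, which is the one decisive use of core-freeness and which your proposal never makes, is much simpler: the subnormal chain with abelian quotients gives $G^{(n)}\subseteq HS^{(n)}=H$, and a normal subgroup of $G$ contained in a core-free subgroup is trivial, so $G^{(n)}=\{1\}$. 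You only use core-freeness to keep the inductive hypothesis alive inside $HT$; the paper uses it to kill $G^{(n)}$ outright, and with that observation the whole induction on $|G|$, the detour through $M=\langle T\rangle$, and the quotient by $\mathrm{core}_M(H\cap M)$ become unnecessary. (That detour is itself problematic: solvability of $M/N$ with $N\subseteq H$ does not reassemble into solvability of $HT$ without already knowing something about $H$, which is the same circularity.)

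The second, quieter gap is that the existence of the subnormal chain is asserted rather than proved. Knowing $HS^{(1)}\trianglelefteq G$ with abelian quotient is Lemma \ref{b} applied to the \emph{generating} transversal $S$; but at the next stage $S^{(1)}$ need not generate $HS^{(1)}$, so the same lemma does not apply verbatim, and this is precisely the difficulty you flagged. The paper resolves it not by restoring the generating hypothesis but by proving the identity $HS^{(n)}=H(HS^{(n-1)})^{(1)}$ (equation \eqref{(*)}) via a careful induction combining Lemma \ref{a} (which needs no generating hypothesis) with the containment $(HS^{(n-1)})^{(1)}\subseteq HS^{(n)}$ coming from the abelian quotient $S^{(n-1)}/S^{(n)}$; iterating gives $HS^{(n)}\supseteq HG^{(n)}$ directly. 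Your preliminary reductions (normality of $HT$, $G/HT\cong S/T$, solvability of the sub right loop $T$) are sound and match the paper's ingredients, but as written the proof does not close.
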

Note that by a solvable transversal we mean a transversal which is solvable with respect to induced right loop structure and solvability of right loop is introduced in  Definition \ref{s}.

\begin{theorem}\label{2}
Suppose that $S$ is a right loop and $T$ is its invariant sub right loop such that the quotient $S/T$ is a group. Then the group torsion of $S$ will be elementary abelian $2$-group. 
\end{theorem}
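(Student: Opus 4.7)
The plan is to use the hypothesis that $S/T$ is a group to show that every generator $f^{S}(y,z)$ of $G_{S}$ preserves each $T$-coset of $S$ setwise, and then---taking $|T|=2$ as indicated in the abstract---to observe that the subgroup of $\mathrm{Sym}(S)$ consisting of permutations stabilizing every two-element coset is already elementary abelian of exponent $2$, so $G_{S}$ sits inside such a subgroup.

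First I would prove that $f^{S}(y,z)(x)$ lies in the same $T$-coset as $x$ for every $x,y,z\in S$. Since $T$ is invariant, the projection $\pi\colon S\to S/T$ is a homomorphism of right loops; applying $\pi$ to the defining equation $f^{S}(y,z)(x)\circ(y\circ z)=(x\circ y)\circ z$ and then using associativity of the group $S/T$ gives
\[
\pi(f^{S}(y,z)(x))\cdot \pi(y\circ z)=(\pi(x)\cdot\pi(y))\cdot\pi(z)=\pi(x)\cdot\pi(y\circ z),
\]
and cancellation in the group $S/T$ forces $\pi(f^{S}(y,z)(x))=\pi(x)$, as required.

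Next, writing $T=\{1,t\}$ (the loop axioms on a two-element $T$ force $t\circ t=1$), every coset $x\circ T=\{x,\,x\circ t\}$ has exactly two elements. The subgroup $N\leq\mathrm{Sym}(S)$ of permutations stabilizing every such coset setwise is a direct product of copies of the symmetric group on two letters, one factor per coset, hence $N\cong (\Z/2\Z)^{|S|/2}$ is elementary abelian of exponent $2$. The previous step places every generator $f^{S}(y,z)$ inside $N$, so $G_{S}\leq N$ is itself elementary abelian of exponent~$2$.

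The only delicate point is the first step, which rests on the precise meaning of \emph{invariant sub right loop} from \cite{rltr}---namely, that the $T$-cosets coincide with the fibers of a right-loop homomorphism $\pi\colon S\to S/T$, so that equations in $S$ project cleanly to $S/T$. Once this is in hand, the remainder of the argument is essentially a counting observation about permutations of a set partitioned into pairs; no further loop-theoretic subtleties arise, and in particular one does not need to analyze each $f^{S}(y,z)$ individually, since the ambient group $N$ already has the desired structure.
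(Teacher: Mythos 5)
Your proposal is correct and follows essentially the same route as the paper's proof: both project the defining equation of $f^S(y,z)$ into the group $S/T$ and use cancellation there to conclude that each generator preserves every two-element congruence class (your theorem statement, like the paper's proof, correctly imports the hypothesis $|T|=2$ from the abstract, without which the claim fails). Your finish is in fact slightly tidier than the paper's: where the paper computes the cycle decomposition of each $f^S(y,z)$ explicitly and then asserts that a group generated by such elements is elementary abelian, your observation that all generators lie in the setwise stabilizer of the pair-partition --- a subgroup of $\mathrm{Sym}(S)$ that is already elementary abelian of exponent $2$ --- makes that last step airtight.
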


\section{Definitions and Lemmas}
Now for defining solvability of a right loop, we need following basic definitions and results.

\begin{definition}\cite{rps}
An equivalence relation $R$ on a right loop $S$ is called a congruence in $S$, if it is a sub right loop of $S \times S$. 
\end{definition}

\begin{definition}\cite{rps}
An equivalence class of identity of a congruence is called invariant sub right loop.
\end{definition}

\begin{remark}\label{r3}
Suppose that $I$ is an invariant sub right loop of a right loop $(S, \circ)$. Then $R=\{(x \circ y, y ) \mid x \in I, ~ \text ~ y \in S\}$ is a congruence in $S$ (for details see \cite[Theorem 2.7]{rps}) such that  the equivalence class of identity $R_1=I$. 
\end{remark}
 \begin{remark} \label{r4}
 If $R$ is a congruence on a right loop $(S,\circ)$ and $R_1$ denotes the equivalence class of identity, then the set $S/R_1:=\{ R_1 \circ x \mid x \in S \} $ together with operation $\circ_1$ defined as
 $(R_1\circ x) \circ_1 (R_1\circ y)=R_1 \circ (x\circ y)$, is a right loop. Now onwards we will use the operation of $S$ to denote the operation of $S/R_1$. Note that $S/R_1$ is also denoted as $S/R$.
 \end{remark}

\begin{lemma}\label{gl2}
Let $(S,\circ)$ be a right quasigroup with identity. Let $R$ be the congruence on $S$ generated by $\{(x,x\theta^Sf^S(y,z))~|x,y, z \in S\}$. Then $R$ is the smallest 
congruence on $S$ such that the quotient right loop $S/R$ is a group.
\end{lemma}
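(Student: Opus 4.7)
The plan is to establish the lemma in two parts: first that $S/R$ is a group, and second that $R$ is minimal with this property.

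For the first part, I would observe that by the very definition of $R$, every generator $(x, x\theta^S f^S(y,z))$ lies in $R$, so in $S/R$ one has $[f^S(y,z)(x)] = [x]$ for all $x, y, z \in S$. Projecting the defining identity $f^S(y,z)(x) \circ (y \circ z) = (x \circ y) \circ z$ of $f^S(y,z)$ into the quotient (using Remark \ref{r4} to interpret the operation), one obtains
\[
[x] \circ ([y] \circ [z]) \;=\; ([x] \circ [y]) \circ [z],
\]
which is associativity in $S/R$. Combined with the right-quasigroup and two-sided identity structure that $S/R$ inherits, associativity upgrades $S/R$ to a group: for any $a \in S/R$, the unique solution $y$ of $X \circ a = 1$ satisfies $(a \circ y) \circ a = a \circ (y \circ a) = a \circ 1 = a = 1 \circ a$, so $a \circ y = 1$ by right cancellation, i.e.\ $y$ is also a left inverse.

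For the second part, let $R'$ be any congruence on $S$ such that $S/R'$ is a group. Fix $x, y, z \in S$ and project the same defining equation of $f^S(y,z)$ into $S/R'$; associativity of $S/R'$ then gives
\[
[f^S(y,z)(x)] \circ [y \circ z] \;=\; [x] \circ [y \circ z]
\]
in $S/R'$. Right cancellation in the right loop $S/R'$ forces $[f^S(y,z)(x)] = [x]$, i.e.\ $(x, x\theta^S f^S(y,z)) \in R'$. Hence $R'$ contains every generator of $R$, and being a congruence it contains the whole congruence they generate, so $R \subseteq R'$.

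The main obstacle I anticipate is less computational than structural: I need to be confident that the equivalence relation generated by the indicated set of pairs is a genuine congruence (that is, a sub right loop of $S \times S$), so that Remark \ref{r4} applies and $S/R$ is in fact a right loop on which the operation above is well defined. Once that setup is in hand, both directions reduce to a one-line manipulation of the defining equation of $f^S(y,z)$ together with right cancellation, which is the single lever doing the work in each direction.
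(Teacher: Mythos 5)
Your proof is correct and follows essentially the same route as the paper's: both halves hinge on projecting the defining identity $(x\circ y)\circ z=(x\theta^S f^S(y,z))\circ(y\circ z)$ into a quotient and using right cancellation, and your explicit check that an associative right loop is a group fills in a step the paper leaves implicit. The only difference is cosmetic: for minimality you argue directly with an arbitrary congruence $R'$ whose quotient is a group, whereas the paper embeds the same cancellation step in a proof of the universal property of $S/R$ with respect to homomorphisms into groups; your version is leaner and suffices for the statement as given, and the "obstacle" you flag is not one, since the congruence generated by a set of pairs always exists and Remark \ref{r4} already guarantees the quotient is a right loop.
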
 
 
\begin{proof} First, we will show that $S/R$ is a group. Let $1$ denote the identity of $S$. Since $R$ is a congruence on $S$, $R_1$ (equivalence class of $1$ under $R$) is an invariant sub right loop of $S$. So for showing that $S/R$ is a group, it is sufficient to show that the binary operation of $S/ R$ is associative. Let $x, y, z \in S$. Then
\n
\begin{eqnarray*}
 ((R_1\circ x)\circ (R_1\circ y))\circ (R_1\circ z) & = & (R_1\circ (x\circ y))\circ (R_1\circ z)\\
& = & R_1\circ ((x\circ y)\circ z)\\
& = & R_1\circ ( x \theta ^S f^S(y,z)\circ (y\circ z))\\
& = & R_1\circ (x \theta ^S f^S(y,z))\circ R_1\circ (y\circ z)\\
& = & R_1\circ (x \theta ^S f^S(y,z))\circ ((R_1\circ y)\circ(R_1\circ z))\\
& = & (R_1\circ x)\circ ((R_1\circ y) \circ (R_1\circ z))\\ 
& & (\text{since} ~(x,x \theta ^S f^S(y,z))\in  R).
 \end{eqnarray*}
Hence $S/R$ is a group.
Let $\phi:S \rightarrow S/{R}$ be the quotient homomorphism ($\phi(x)=R_1\circ x,~x\in S$).
 Let $H$ be a group with a homomorphism (of right loops) $\phi'~: ~ S\rightarrow H$. Since $\phi'$ is a homomorphism of right loops, $\phi'(S)$ is a sub right loop of $H$. 
 Further, since the binary operation on $\phi'(S)$ is associative, it is a subgroup of $H$.
It is easy to verify that Ker$\phi'$ is an invariant sub right loop, that is there exists a congruence $K$ on $S$ such that $K_1=Ker\phi'$ (by Remark \ref{r3}). By the Fundamental Theorem of homomorphisms for right loops there exists a unique one-one homomorphism $\bar{\phi}'~:S/K \rightarrow H$ such that $\bar{\phi}'\circ \nu={\phi}'$, where $\nu :S \rightarrow S/K$ is the quotient homomorphism ($\nu(x)=K_1\circ x,~x\in S$). Since $S/K$ is a group (being isomorphic to the subgroup ${\phi}'(S)$ of $H$), the associativity of its binary operation implies that $(x,x \theta ^S f^S(y,z))\in K$ for all $x,y,z \in S$. This implies $R \subseteq K$. This defines an onto homomorphism $\eta$ from $S/R$ to $S/K$ given by $\eta (R_1\circ x)=K_1\circ x$. Let ${\eta}'= \bar{\phi}'\circ \eta$. Then it follows easily that ${\eta}'$ is the unique homomorphism from $S/R$ to $H$ such that ${\eta}'\circ  \phi= {\phi}'$.
\end{proof}
\begin{remark}\label{gr3}
 Let $S$ and $R$ be as in the above Lemma. Let $T$ be a congruence
on $S$ containing $R$. Since $T_1/R_1$ is an invariant sub right loop of $S/R_1$, it is a normal subgroup of $S/R_1$. Thus $S/T$ is a group for it is
isomorphic to $S/R_1/(T_1/R_1)$.
\end{remark}
It is easy to prove following lemma.
\begin{lemma}\label{l3}
Let $(S, \circ)$ be a right loop. Let $L$ be the congruence on $S$ generated by $\{(x,x\theta^Sf^S(y,z))~|x,y\in S\} \cup \{(x \circ y, y\circ x) \mid x,y \in S\}$. Then $L$ is the smallest 
congruence on $S$ such that the quotient right loop $S/L$ is an abelian group.  
\end{lemma}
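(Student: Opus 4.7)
The plan is to imitate the proof of Lemma \ref{gl2} and add one extra step for commutativity. First I would observe that $L$ contains the congruence $R$ of Lemma \ref{gl2} (because every generator of $R$ is a generator of $L$). By Remark \ref{gr3}, this immediately forces $S/L$ to be a group, so I do not have to redo the associativity computation from Lemma \ref{gl2}.

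Next I would show that $S/L$ is abelian. For any $x,y\in S$, the pair $(x\circ y,\, y\circ x)$ belongs to the generating set of $L$, hence $L_1\circ (x\circ y)=L_1\circ (y\circ x)$ in $S/L$. Using the right-loop operation on cosets from Remark \ref{r4}, this rewrites as $(L_1\circ x)\circ (L_1\circ y)=(L_1\circ y)\circ (L_1\circ x)$, proving commutativity of the group $S/L$.

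For minimality, suppose $K$ is any congruence on $S$ such that $S/K$ is an abelian group. Since $S/K$ is in particular a group, Lemma \ref{gl2} gives $R\subseteq K$, so $K$ contains the first generating set. Since $S/K$ is abelian, for every $x,y\in S$ we have $K_1\circ (x\circ y)=K_1\circ (y\circ x)$, which means $(x\circ y,\, y\circ x)\in K$; so $K$ contains the second generating set as well. Therefore $K$ contains the whole generating set of $L$, and since $K$ is a congruence it contains $L$. Finally I would invoke the Fundamental Theorem of homomorphisms for right loops (exactly as in the proof of Lemma \ref{gl2}) to record the universal property of the quotient map $S\to S/L$ into any abelian group.

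No real obstacle is expected; the only thing to be slightly careful about is keeping track that the operation on the quotient $S/L$ is well defined (guaranteed because $L_1$ is an invariant sub right loop) before manipulating the cosets to deduce commutativity.
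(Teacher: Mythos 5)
Your argument is correct: the paper itself omits the proof of Lemma \ref{l3} (dismissing it as easy), and your route --- noting $R\subseteq L$ so that Remark \ref{gr3} gives the group structure, reading off commutativity from the extra generators, and deducing minimality from the minimality of $R$ in Lemma \ref{gl2} together with the fact that an abelian quotient forces $(x\circ y,\,y\circ x)\in K$ --- is exactly the intended adaptation of the proof of Lemma \ref{gl2}. No gaps.
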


\begin{definition}
We define $S^{(1)}$ to be the smallest invariant sub right loop of $S$ such that $S/S^{(1)}$ is an abelian group that is, if there is another invariant sub right loop $N$ such that $S/N$ is an abelian group, then $S^{(1)} \subseteq N$.

We define $S^{(n)}$ by induction. Suppose $S^{(n-1)}$ is defined. Then $S^{(n)}$ is an invariant sub right loop of $S$ such that $S^{(n)}=(S^{(n-1)})^{(1)}$.
\end{definition}

Note that if $S$ is a group, then $S^{(1)}$ is the commutator subgroup and $S^{(n)}$ is the $n$th-commutator subgroup.

\begin{definition}\label{s}
We call $S$ solvable if  there exists an $n \in \N$ such that $S^{(n)}=\{1\}$.
\end{definition}

\begin{lemma}\label{a}
Let $G$ be a group, $H$ a subgroup of $G$ and $S$ a transversal of $H$ in $G$. Suppose that $N\trianglelefteq G $ containing $H$. Then 
$$ G/N=HS/N \cong S/{N\cap S}.$$
\end{lemma}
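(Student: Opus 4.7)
The plan is to construct a surjective homomorphism of right loops $\pi : S \to G/N$ and identify its kernel with $S \cap N$; the conclusion then follows from the Fundamental Theorem of homomorphisms for right loops (the version already invoked in the proof of Lemma \ref{gl2}). The first equality $G/N = HS/N$ in the statement is immediate, because $G = HS$ (every right coset of $H$ is represented in $S$).

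The map I would use is $\pi(s) = Ns$. To check that $\pi$ respects the right loop operation, take $x, y \in S$; by definition $x \circ y$ is the unique element of $S \cap Hxy$, so $x \circ y = hxy$ for some $h \in H$. Since $H \subseteq N$, we have $h \in N$, and hence
\[ \pi(x \circ y) = N(x \circ y) = Nhxy = Nxy = (Nx)(Ny) = \pi(x)\pi(y). \]
Surjectivity uses both $G = HS$ and $H \subseteq N$: given $g \in G$, write $g = hs$ with $h \in H$ and $s \in S$, so $gN = Ng = Nhs = Ns = \pi(s)$, the second equality using normality of $N$ and the third using $h \in N$.

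The fiber of $\pi$ over the identity coset of $G/N$ is precisely $\{s \in S \mid s \in N\} = S \cap N$. By the Fundamental Theorem of homomorphisms for right loops, this set is an invariant sub right loop of $S$ and the induced map $S/(S \cap N) \to G/N$ is an isomorphism of right loops (and hence of groups, since $G/N$ is a group). The argument is essentially bookkeeping; the only substantive point is that the hypothesis $H \subseteq N$ is what aligns the right loop product $x \circ y$ with the group product $xy$ modulo $N$, and I do not anticipate any obstacle beyond careful use of this observation.
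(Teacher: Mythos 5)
Your proposal is correct and follows essentially the same route as the paper: both define the map $S \to G/N$ sending $s$ to its coset, verify it is a right loop homomorphism using $H \subseteq N$ to replace $x \circ y = hxy$ by $xy$ modulo $N$, identify the kernel with $S \cap N$, check surjectivity from $G = HS$, and conclude via the Fundamental Theorem of homomorphisms for right loops. The only cosmetic difference is your use of left cosets $Ns$ versus the paper's $xN$, which is immaterial since $N$ is normal.
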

\begin{proof} Suppose that $\circ$ denotes the induced right loop operation on $S$. Consider the map $\psi :S \rightarrow HS/N$ defined as $\psi(x)=xN$. This is a homomorphism, for
\begin{eqnarray*}
\psi(x\circ y) & = & (x\circ y)N\\
~~~~~~~~~~& = & hxyN ~\text{for some $h \in H$}\\ 
~~~~~~~~~~& = & xyN~~(H \subseteq N)\\
~~~~~~~~~~& = & xNyN \\
~~~~~~~~~~& = & \psi(x) \psi(y).
\end{eqnarray*}     
Also, $Ker \psi =\{x \in S |~xN=N \}
=\{x \in S |~ x \in N \}=S \cap N.$
Since for $h\in H~$ and $~x\in S,$~we have $hxN=xN$ and $\psi (x)=xN$, $\psi$ is onto and so by the Fundamental Theorem of homomorphisms for right loops,
$S/{N\cap S} \cong HS/N $.
\end{proof}
 
 Let $G$ be a group, $H$ a subgroup and $S$ a transversal of $H$ in $G$. Suppose that $\circ$ is the induced right loop structure on $S$. We define a map $f: S \times S \rightarrow H$ as: for $x,y \in S$, $f(x,y):=xy(x \circ y)^{-1}$. We further define the action $\theta$ of $H$ on $S$ as $\{x \theta h\}:= S \cap Hxh$ where $h \in H$ and $x \in S$. With these notations it is easy to prove following lemma. 
 
 \begin{lemma}\label{l4}
 For $x,y,z \in S$, we have $x \theta^Sf^S(y,z)=x \theta f(y,z)$.
 
 \end{lemma}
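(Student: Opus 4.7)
The plan is to show directly that the element $w := x \theta f(y,z)$ satisfies the defining equation of $f^S(y,z)(x)$, namely $X \circ (y\circ z) = (x\circ y)\circ z$; since this equation has a unique solution in $S$, it will follow that $w = f^S(y,z)(x) = x\theta^S f^S(y,z)$.

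The first step is to unwind the definitions inside $G$. By definition, $f(y,z) = yz(y\circ z)^{-1} \in H$, and $w$ is the unique element of $S$ lying in the coset $Hx f(y,z) = Hx\cdot yz(y\circ z)^{-1}$. Multiplying on the right by $y\circ z$, we get $w\cdot(y\circ z) \in Hxyz$. Hence the coset $Hw(y\circ z)$ equals $Hxyz$, and therefore the unique element of $S$ in $Hw(y\circ z)$, which is $w\circ (y\circ z)$ by definition of the induced right loop operation, is the unique element of $S$ in $Hxyz$.

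The second step is the analogous computation for the other side. Since $x\circ y$ is the unique element of $S$ in $Hxy$, we have $x\circ y = h_1 xy$ for some $h_1 \in H$, and consequently $H(x\circ y)z = Hxyz$. Thus $(x\circ y)\circ z$ is also the unique element of $S$ in $Hxyz$. Comparing the two computations, $w\circ(y\circ z) = (x\circ y)\circ z$, so $w$ solves the equation defining $f^S(y,z)(x)$; by uniqueness of the solution, $w = x\theta^S f^S(y,z)$, proving the lemma.

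There is no real obstacle — the argument is a direct coset chase. The only thing to be careful about is not confusing the two actions: $\theta^S$ is the abstract action of $G_S \leq \mathrm{Sym}(S)$ on $S$ defined via the right loop structure, whereas $\theta$ uses the ambient group $G$ via the $H$-coset intersection $S\cap Hxh$. The lemma says these coincide on the specific elements $f^S(y,z) \in G_S$ and $f(y,z) \in H$ once $H$ is identified with $G_S$ through the isomorphism $G \cong G_SS$, $H \cong G_S$.
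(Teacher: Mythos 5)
Your proof is correct and is exactly the direct coset computation the paper has in mind (the paper omits the proof, merely asserting the lemma is easy): both $w\circ(y\circ z)$ and $(x\circ y)\circ z$ are identified as the unique element of $S\cap Hxyz$, forcing $w=f^S(y,z)(x)$ by uniqueness of solutions in a right loop. Nothing is missing.
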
 
 
\begin{lemma}\label{b}
Let $H$ be a subgroup of a group $G$ and $S$ a transversal of $H$ in $G$. Let $T$ be the congruence on $S$ such that~$\{(x,x\theta h)~|~h \in H,x\in S \} \subseteq T $. Then $S/T$ is a group. Moreover, $N=HT_1 \trianglelefteq HS=G$ (and so $H \leqslant N$ and~ $N\cap S=T_1)$ and $G/N \cong S/T $, where $T_1$ denotes the equivalence class of 1 under $T$.
\end{lemma}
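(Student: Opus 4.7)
The plan is to first verify that $S/T$ is a group via Lemma \ref{gl2}, and then to construct a surjective group homomorphism $\pi : G \to S/T$ whose kernel is exactly $N = HT_1$; the remaining claims will then fall out of the first isomorphism theorem for groups.

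By Lemma \ref{l4}, $x\theta^S f^S(y,z) = x\theta f(y,z)$ for all $x,y,z \in S$, and since $f(y,z) \in H$, the defining hypothesis on $T$ places every pair $(x,x\theta^S f^S(y,z))$ inside $T$. Hence $T$ contains the congruence $R$ of Lemma \ref{gl2}, so by Remark \ref{gr3} the quotient $S/T$ is a group.

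Next, using the unique decomposition $g = hx$ with $h\in H$ and $x\in S$, define $\pi:G \to S/T$ by $\pi(hx) := T_1\circ x$; this is obviously a surjective set map. To check multiplicativity, write $g_i = h_i x_i$ and push $h_2$ past $x_1$: by the definition of the action $\theta$, $x_1 h_2 = h'(x_1\theta h_2)$ for some $h'\in H$. Then
\[
g_1 g_2 \;=\; h_1 h'\, f(x_1\theta h_2, x_2)\, \bigl((x_1\theta h_2)\circ x_2\bigr),
\]
whose $S$-part is $(x_1\theta h_2)\circ x_2$. Since $(x_1, x_1\theta h_2)\in T$ and $T$ is a congruence (a sub right loop of $S\times S$), closure under the right loop operation yields $(x_1\circ x_2, (x_1\theta h_2)\circ x_2)\in T$. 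Therefore $\pi(g_1 g_2) = T_1\circ ((x_1\theta h_2)\circ x_2) = T_1\circ (x_1\circ x_2) = \pi(g_1)\pi(g_2)$ in the group $S/T$, as required.

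Finally, $\pi(hx) = T_1$ if and only if $x\in T_1$, so $\ker\pi = HT_1 = N$, which immediately gives $N \trianglelefteq G$ and $G/N \cong S/T$. For $N\cap S = T_1$: if $s = hx \in S$ with $h\in H$ and $x\in T_1$, then $s$ and $x$ both lie in the coset $Hx$, so uniqueness of the transversal representative forces $s = x$ (and $h = 1$); the reverse inclusion is trivial. The principal obstacle is verifying multiplicativity of $\pi$, which requires absorbing two distinct twists into $T$: the associator twist from $f$ (handled by Lemma \ref{l4}) and the $\theta$-twist produced when moving $h_2$ past $x_1$ (handled by the defining hypothesis on $T$). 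Once both twists are accounted for, the remainder of the argument is bookkeeping with unique coset representatives.
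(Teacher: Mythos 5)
Your proposal is correct and follows essentially the same route as the paper: it establishes $R \subseteq T$ via Lemma \ref{l4} and Remark \ref{gr3} to get that $S/T$ is a group, and then checks that $hx \mapsto T_1\circ x$ is a surjective homomorphism $G \to S/T$ with kernel $HT_1$, using the hypothesis $(x_1, x_1\theta h_2)\in T$ to absorb the twist from moving $h_2$ past $x_1$. Your explicit verification of $N\cap S = T_1$ is a minor addition the paper leaves implicit, but the argument is the same.
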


\begin{proof}~By Lemma \ref{l4}, $x\theta f(y,z)=x\theta^S f^S(y,z)$ for all $ x,y,z \in S$. Then by Lemma \ref{gl2}, $R \subseteq T$ and by Remark \ref{gr3}, $S/T$ is a group.
Let $\phi:G \rightarrow S/T$ be the map defined by $\phi(hx)=T_1\circ x,~h\in H,~x\in S$. This is a homomorphism, because for all $h_1,~h_2 \in H$ and $x_1,~x_2 \in S,$
\begin{eqnarray*} 
\phi(h_1x_1.h_2x_2) & = & \phi(h_1h(x_1{\theta} h_2\circ x_2))~\text{for some $h \in H$}\\
 & = & T_1\circ (x_1{\theta} h_2\circ x_2)\\
 & = & (T_1\circ (x_1{\theta} h_2))~\circ ~(T_1\circ x_2)\\
 & = & (T_1\circ x_1)~\circ~(T_1\circ x_2)~~~~~~~(\text{for}~(x_1{\theta} h_2,x_1) \in T)\\
 & = & \phi (h_1x_1)\phi (h_2x_2).
\end{eqnarray*}
Let $h\in H$ and $x\in S.$ Then $hx\in Ker \phi $ if and only if $x\in T_1.$ Hence $Ker\phi = HT_1=N(say)$. This proves the lemma.
\end{proof}

\section{Proof of Theorems}

\begin{proof}[Proof of Theorem \ref{1}]
Let $G$ be a finite group and $H$ a core-free subgroup of it. Suppose that $S$ is a generating transversal of $H$ in $G$. Then the group $G$ can be written as $HS$. By Lemma \ref{a}, $G/HG^{(1)} \cong S/S\cap HG^{(1)}$.
So, \begin{equation}\label{(i)} S^{(1)} \subseteq S\cap HG^{(1)}.
\end{equation}

By Lemma \ref{b}, $HS^{(1)}$ is normal subgroup of $G$.
Thus $G/HS^{(1)} =S/S^{(1)} $ (Lemma \ref{a}).
Since $S/S^{(1)} $ is abelian, $ G^{(1)} \subseteq HS^{(1)}.$ Thus  
\begin{equation} \label{(ii)} S\cap HG^{(1)}\subseteq S^{(1)}.
\end{equation} 

From \eqref{(ii)} and \eqref{(i)}, it is clear that \begin{equation} S\cap HG^{(1)} = S^{(1)} \end{equation}\label{(iii)(a)} and
\begin{equation}\label{(iii)(b)}HG^{(1)} =HS^{(1)}.\end{equation} 
%Now $HS^{(2)} \trianglelefteq HS^{(1)} $ (Lemma \ref{b}).
%So $HS^{(1)}/HS^{(2)} \cong S^{(1)}/S\cap S^{(2)}$  (Lemma \ref{a}).
%So $(HS^{(1)})^{(1)} \subseteq HT^{(2)}$.
%This implies $H(HS^{(1)})^{(1)} \subseteq HS^{(2)} \hfill (iii)(d)$.
%So, $S\cap H(HS^{(1)})^{(1)} \subseteq HS^{(2)}  \hfill (iii)(e)$.

%Also $HS^{(1)}/H(HS^{(1)})^{(1)} \cong S^{(1)}/ S^{(1)} \cap H(HS^{(1)})^{(1)}$. That is $S^{(2)} \subseteq S^{(1)} \cap H(HS^{(1)})^{(1)}$.
%So, $T^{(2)}=T^{(1)} \cap H(HT^{(1)})^{(1)} \hfill (iv)$. 
We will use induction to prove that, $HS^{(n)}=  H(HS^{(n-1)})^{(1)}$ for $n \geq 1$ and by $S^{(0)}$ we mean $S$. For $n=1$, $HS^{(1)}= HG^{(1)}= H(HS^{(0)})^{(1)}$ (by \eqref{(iii)(b)}).
By induction, suppose that $HS^{(n-1)}= H(HS^{(n-2)})^{(1)}$.
Since  $S^{(n-1)}/S^{(n)}\cong HS^{(n-1)}/HS^{(n)}  $ is an abelian group, $(HS^{(n-1)})^{(1)} \subseteq HS^{(n)}$. 
Thus $ H(HS^{(n-1)})^{(1)}$ $ \subseteq HS^{(n)}$.

 Now, $HS^{(n-1)}/H(HS^{(n-1)})^{(1)} $ $\cong S^{(n-1)}/(S^{(n-1)} \cap H(HS^{(n-1)})^{(1)})$ (Lemma \ref{a}). 
So $S^{(n)} \subseteq S^{(n-1)} \cap H(HS^{(n-1)})^{(1)}=S \cap H(HS^{(n-1)})^{(1)}$. 
That is, \begin{equation}\label{(*)} HS^{(n)}= H(HS^{(n-1)})^{(1)}~\text{for all}~ n \geq 1.\end{equation}  
Now  \eqref{(*)} implies that \begin{equation}\label{(**)} HS^{(n)}=H(HS^{(n-1)})^{(1)}  \supseteq H(H(HS^{(n-2)})^{(1)})^{(1)} \supseteq H(HS^{(n-2)})^{(2)} . \end{equation}  
Proceeding inductively, we have $ HS^{(n)}\supseteq H(HS)^{(n)}=HG^{(n)}$.
Suppose that $S$ is solvable, that is there exists $n \in \mathbb{N}$ such that $S^{(n)}=\{1\}$. Then $(G)^{(n)} \subseteq H$. Since $(G)^{(n)}$ is normal subgroup of $G$ contained in $H$, so $(G)^{(n)}=\{1\}$. This proves the theorem.
\end{proof}
 
 Converse of the above theorem is not true. For example take $G$ to be  the Symmetric group on three symbols and $H$ to be any two order subgroup of it. Then $H$ has no solvable generating transversal but we know that $G$ is solvable.  Following is the easy consequence of the above theorem.
 
 \begin{corollary}
 If $S$ is a finite solvable right loop, then $G_SS$ is solvable.
 \end{corollary}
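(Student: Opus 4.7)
The plan is to deduce the corollary directly from Theorem \ref{1} by recognising that for any right loop $S$ the group $G_SS$ comes equipped, by construction, with $S$ as a generating transversal of a core-free subgroup, and that the induced right loop structure coincides with the original one.

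First I would unpack the setup reviewed in the introduction. The group $G_SS$ is defined as the subgroup of $\mathrm{Sym}(S)$ generated by the right multiplication maps $R_s\colon x \mapsto x\circ s$, $s\in S$. Hence, as stated in the excerpt, the $R_s$'s form a transversal of $G_S$ in $G_SS$ that also generates $G_SS$; under the natural identification $s \leftrightarrow R_s$ this exhibits $S$ itself as a generating transversal of $G_S$ in $G_SS$, and the induced transversal right loop structure is exactly the original $\circ$ on $S$ (this is precisely Proposition 3.10 of \cite{rltr} invoked in the excerpt). Also, finiteness of $S$ makes $\mathrm{Sym}(S)$, and therefore $G_SS$, finite.

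Next I would verify that $G_S$ is core-free in $G_SS$. The group $G_SS$ acts faithfully on $S$ (it sits inside $\mathrm{Sym}(S)$), and $G_S$ is the stabiliser of the identity element $1\in S$ under this action, because $G_S$ is generated by the maps $f^S(y,z)$ which are the deviations from associativity and fix $1$. The core of $G_S$ in $G_SS$ is the kernel of the action on the coset space $G_SS/G_S$, which under the identification with $S$ is precisely the kernel of the action on $S$; faithfulness forces this kernel to be trivial, so $G_S$ is core-free.

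With these pieces assembled, the corollary is immediate: $G_SS$ is a finite group, $G_S$ is core-free, $S$ is a generating transversal of $G_S$ in $G_SS$, and $S$ is solvable as a right loop by hypothesis. Theorem \ref{1} then yields the solvability of $G_SS$. There is no real obstacle in the argument; the only thing requiring a careful sentence is the identification of the abstract right loop $S$ with the transversal $\{R_s : s\in S\}$ so that "$S$ is solvable" in the hypothesis of the corollary matches "$S$ is solvable as an induced transversal right loop" in the hypothesis of Theorem \ref{1}.
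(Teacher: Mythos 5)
Your proposal is correct and is exactly the argument the paper intends: the corollary is stated as an immediate consequence of Theorem \ref{1} applied to the pair $(G_SS, G_S)$, with $S$ realised as a generating transversal via Proposition 3.10 of \cite{rltr}. Your explicit verification that $G_S$ is core-free (via the faithful action of $G_SS$ on $S$) and that the induced transversal structure agrees with the original $\circ$ supplies the details the paper leaves unsaid.
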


\begin{proof}[Proof of Theorem \ref{2}]
Suppose that $T=\{1, t\}$. Since $T$ is an invariant sub right loop, so $t$ is fixed by all the elements of $G_S$. 
Consider the equation $(x \circ y ) \circ z = (x \theta^S f^S(y,z)) \circ (y \circ z)$. 
Since $S/T$ has associativity, so  $T \circ ( (x \circ y ) \circ z) = (T \circ x \theta^S f^S(y,z)) \circ ((T \circ y) \circ (T \circ z))$. Since $S/T$ is a group, so by cancellation law we have $T\circ x=T \circ (x \theta^S f^S(y,z))$. This implies $\{x, t \circ x \}=\{x \theta^S f^S(y,z), t \circ x \theta^S f(y,z) \}$. Note that for each $y,z \in S$, an element $x \in S$ is either fixed by $f^S(y,z)$ or $x \theta^S f^S(y,z)=f^S(y,z)(x)=t \circ x$. Suppose that $f^S(y,z)(x)=t \circ x$. Since $f^S(y,z)$ is bijective, so $f^S(y,z)(t \circ x) \neq t \circ x$. Then $f^S(y,z)(t \circ x)= t\circ (t \circ x)=(t \theta^S f^S(y,z)^{-1} \circ t) \circ x=(t \circ t )\circ x =x$. Thus either $x$ is fixed by $f^S(y,z)$ or there is a transposition $(x, t \circ x)$ in the cycle decomposition of $f^S(y,z)$. That is $f^S(y,z)$ can be written as a product of disjoint transpositions. This clearly implies that $G_S= \langle \{f^S(y,z) \mid y,z \in S \} \rangle $ is an elementary abelian 2-group. This proves the theorem.
\end{proof}

Following is a known result (can be proved independently) follows from Theorem \ref{2}. 

\begin{corollary}
Let $G$ be a finite group and $H$ a core-free subgroup contained in a normal subgroup $N$ such that the index of $H$ in $N$ is $2$. Then $N$ is an elementary abelian $2$-group.
\end{corollary}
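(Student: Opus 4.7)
The plan is to realise the corollary as a direct application of Theorem \ref{2} to an arbitrary transversal of $H$ in $G$, followed by a short group-theoretic upgrade that uses the core-free hypothesis one more time. First I would fix any transversal $S$ of $H$ in $G$ and set $T := S \cap N$. Since $H \subseteq N$ and $[N:H] = 2$, the subset $T$ has exactly two elements. By Lemma \ref{a} the map $\psi : S \to G/N$ defined by $\psi(x) = xN$ is a surjective right loop homomorphism with kernel exactly $T$, so $T$ is the equivalence class of $1$ under the congruence $x \sim y \Longleftrightarrow \psi(x) = \psi(y)$ on $S$; in particular $T$ is an invariant sub right loop and $S/T \cong G/N$ is a group. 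These are precisely the hypotheses of Theorem \ref{2}, which I would invoke to conclude that the group torsion $G_S$ is elementary abelian of exponent $2$. Since $H$ is core-free in $G$, the identification $G_S \cong H$ recalled in the introduction then forces $H$ itself to be an elementary abelian $2$-group.

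To pass from $H$ to $N$, I would exploit core-freeness one more time. Since $N/H$ has order $2$, both the commutator subgroup $[N,N]$ and the subgroup $N^2$ generated by squares lie inside $H$. Each is characteristic in $N$ and $N \trianglelefteq G$, so both $[N,N]$ and $N^2$ are normal subgroups of $G$ contained in the core-free $H$; hence both must be trivial, so $N$ is abelian and of exponent $2$, that is, an elementary abelian $2$-group.

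I do not anticipate a serious obstacle. The only point needing a moment's care is verifying that $T = S \cap N$ is genuinely an \emph{invariant} sub right loop (as required by Theorem \ref{2}) rather than merely a sub right loop; but this is immediate from realising $T$ as the kernel of the right loop homomorphism $S \to G/N$ supplied by Lemma \ref{a}.
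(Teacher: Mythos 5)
Your proof is correct, but note that the paper supplies no argument here beyond the remark that the corollary ``follows from Theorem \ref{2}'' and ``can be proved independently''; your write-up in effect contains both proofs at once. Your first half follows the paper's intended route: a transversal $S$ of $H$, the two-element kernel $T=S\cap N$ of the homomorphism from Lemma \ref{a}, Theorem \ref{2}, and the identification $G_S\cong H$, yielding that $H$ is elementary abelian. You correctly recognize that this controls only $H$ and not $N$ --- a real gap in the bare claim that the corollary ``follows from Theorem \ref{2}'' --- and you fill it. But the filler, your second half ($[N,N]$ and the subgroup generated by squares are characteristic in $N$, hence normal in $G$, lie in $H$ because $[N:H]=2$, and are therefore trivial by core-freeness), is already a complete proof of the corollary on its own, using nothing about right loops; it is presumably the ``independent proof'' the paper alludes to, and it makes your entire first half logically superfluous. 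One caution if you keep that first half: the identification $G_S\cong H$ holds for a \emph{generating} transversal (which exists by Cameron's theorem since $H$ is core-free), not for an arbitrary one --- for $G$ the symmetric group on three letters, $H$ of order $2$ and $S$ the alternating subgroup, $S$ is itself a group, so $G_S=\{1\}\not\cong H$ --- hence ``fix any transversal'' should read ``fix a generating transversal.'' (The paper's own introduction omits this hypothesis as well, so you inherited the imprecision.)
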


\section{Some Problems}
It is an interesting problem to find condition on  a right loop $S$ such that $G_SS$ holds some group theoretical property. For example, what is the minimum condition on right loop $S$ for which $G_SS$ is a   nilpotent group?

\end{document}